\theoremstyle{thmstyleone}%
\newtheorem{theorem}{Theorem}[section]% meant for sectionwise numbers
\newtheorem{proposition}[theorem]{Proposition}% 
\newtheorem{corollary}[theorem]{Corollary}% 
\theoremstyle{thmstyletwo}%
\theoremstyle{thmstylethree}%
\begin{document}

\title[General tropical convergence of harmonic amoebas]
{General tropical convergence of harmonic amoebas
}

%%=============================================================%%
%% GivenName	-> \fnm{Joergen W.}
%% Particle	-> \spfx{van der} -> surname prefix
%% FamilyName	-> \sur{Ploeg}
%% Suffix	-> \sfx{IV}
%% \author*[1,2]{\fnm{Joergen W.} \spfx{van der} \sur{Ploeg} 
%%  \sfx{IV}}\email{iauthor@gmail.com}
%%=============================================================%%

\author{\fnm{Takashi} \sur{Ichikawa}}\email{ichikawn@cc.saga-u.ac.jp}

%\author*[1,2]{\fnm{First} \sur{Author}}\email{iauthor@gmail.com}

%\author[2,3]{\fnm{Second} \sur{Author}}\email{iiauthor@gmail.com}
%\equalcont{These authors contributed equally to this work.}

%\author[1,2]{\fnm{Third} \sur{Author}}\email{iiiauthor@gmail.com}
%\equalcont{These authors contributed equally to this work.}

\affil{\orgdiv{Department of Mathematics}, 
\orgname{Faculty of Science and Engineering}, 
\orgaddress{\street{Saga 840-8502}, \country{Japan}}}

%\affil*[1]{\orgdiv{Department}, \orgname{Organization}, \orgaddress{\street{Street}, \city{City}, %\postcode{100190}, \state{State}, \country{Country}}}

%\affil[2]{\orgdiv{Department}, \orgname{Organization}, \orgaddress{\street{Street}, \city{City}, 
%\postcode{10587}, \state{State}, \country{Country}}}

%\affil[3]{\orgdiv{Department}, \orgname{Organization}, \orgaddress{\street{Street}, \city{City}, 
%\postcode{610101}, \state{State}, \country{Country}}}

%%==================================%%
%% Sample for unstructured abstract %%
%%==================================%%

\abstract{
By using Schottky uniformization theory of degenerating algebraic curves, 
we describe the tropical convergence of harmonic amoebas of pointed Riemann surfaces to 
tropical curves which are not necessarily simple. 
We extend Lang's results on the simple tropical convergence 
based on the Frenchel-Nielsen coordinates to the nonsimple case. 
Our results are hoped to give contributions in compactifying the moduli space of 
pointed Riemann surfaces with tropical curves, 
and in studying crystallization of general dimer models. 
}

\keywords{Amoebas, Tropical convergence, Families of Riemann surfaces, Schottky uniformization, 
Abelian differentials}

%%\pacs[JEL Classification]{D8, H51}

%%\pacs[MSC Classification]{35A01, 65L10, 65L12, 65L20, 65L70}

\maketitle

\section{Introduction}

Amoebas of complex varieties and their tropical convergence were main research subjects connecting 
complex geometry with tropical geometry, 
and amoebas of Schottky uniformized pointed Riemann surfaces were recently used for 
studying dimer models in statistical mechanics (cf. \cite{BB1, BBS}). 
Lang \cite{L} generalized the tropical convergence shown by Mikhalkin \cite{M1, M2} 
for harmonic amoebas  expressed as integrals of imaginary normalized differentials 
on pointed Riemann surfaces, 
and Berggren-Borodin \cite{BB2} applied his results to studying 
crystallization of the Aztec diamond dimer models. 
Lang used the Frenchel-Nielsen coordinates on the Teichm\"{u}ller space 
associated with pants decomposition, 
and hence he treat only the case that tropical curves are {\it simple}, 
namely, the underlying graphs are trivalent. 
Therefore, 
it is hoped to extend Lang's results to general tropical curves which are not 
necessarily simple for compactifying the moduli space of 
pointed Riemann surfaces with tropical curves (see comments in \cite[Section 1.3]{L}), 
and studying crystallization of general dimer models 
(see comments on \cite[Theorem 1.1]{BB2}). 
The aim of the present paper is to apply the Schottky uniformization theory on families of 
algebraic curves with possibly reducible degeneration \cite{I1, I2} to studying 
the tropical and phase-tropical convergences of harmonic amoebas 
of pointed Riemann surfaces to general tropical curves. 
By using explicit formulas of imaginary normalized differentials and their integrals, 
we show the following main results of this paper in a straight way. 

Let $C = (G, \ell)$ be a tropical curve of genus $g$ with $n$ leaves 
provided that $2g + n > 2$. 
Therefore, $G$ is a connected graph consisting of the sets $V_{G}$ of vertices, 
$E_{G}$ of oriented edges and $L_{G}$ of leaves 
such that ${\rm rank} \, H_{1}(G, \mathbb{Z}) = g$, 
$|L_{G}| = n$, 
and $\ell : E_{G} \rightarrow {\mathbb R}_{> 0}$ is a length function. 
Then for any $m \times n$ real residue matrix $R = (R_{1},..., R_{m})^{T}$, 
there exists a unique set $\{ {\rm w}_{R_{i}} = {\rm w}_{R_{i}}^{C} \}_{1 \leq i \leq m}$ 
of the associated $1$-forms on $C$ 
which are {\it exact}, namely, their integrals along any loop in $G$ are $0$. 
Therefore, 
one has a map $\pi_{R} : C \rightarrow {\mathbb R}^{m}$ defined as 
$$
\pi_{R}(q) = \left( \int_{p}^{q} {\rm w}_{R_{1}},..., \int_{p}^{q} {\rm w}_{R_{m}} \right) \quad (q \in C)
$$
up to the choice of a point $p \in C$, 
and the set $\pi_{R}(C) \subset {\mathbb R}^{m}$ is called a {\it harmonic tropical curve}. 
Furthermore, 
take a family $\{ \mathcal{R}_{s} \}_{s > 0}$ of Riemann surfaces of genus $g$ with $n$ 
marked points which are deformations of a singular complex curve $C_{0}$ with dual graph $G$. 
Then for each $\mathcal{R}_{s}$, 
there exists a unique set 
$\{ \omega_{R_{i}} = \omega_{R_{i}}^{\mathcal{R}_{s}} \}_{1 \leq i \leq m}$ of 
the (Abelian) differentials on $\mathcal{R}_{s}$ associated with $R$ 
which are {\it imaginary normalized}, 
namely, their integrals along any loop in $\mathcal{R}_{s}$ are purely imaginary. 
Therefore, 
one has a map $\mathcal{A}_{R} : \mathcal{R}_{s} \rightarrow {\mathbb R}^{m}$ defined as 
$$
\mathcal{A}_{R}(q) = \left( {\rm Re} \left( \int_{p}^{q} \omega_{R_{1}} \right),..., 
{\rm Re} \left( \int_{p}^{q} \omega_{R_{m}} \right) \right) \quad (q \in \mathcal{R}_{s})
$$
up to the choice of a point $p \in \mathcal{R}_{s}$, 
and the set $\mathcal{A}_{R}(\mathcal{R}_{s}) \subset {\mathbb R}^{m}$ is called 
a {\it harmonic amoeba}. 

\begin{theorem} {\rm (cf. \cite[Theorem 1]{L} for simple tropical curves)} 
Let $\{ \mathcal{R}_{s} \}_{s>0}$ be a family of $n$-pointed Riemann surfaces of genus $g$ 
as deformations of $C_{0}$ whose deformation parameters $y_{e}$ $(e \in E_{G})$ 
defined in (3.1) below satisfy $|y_{e}| = s^{\ell_{s}(e)}$, 
where $\lim_{s \downarrow 0} \ell_{s}(e) = \ell(e)$. 
Then under $s \downarrow 0$, 
the sequence of maps 
$$
\frac{1}{\log s} \mathcal{A}_{R} : \mathcal{R}_{s} \rightarrow {\mathbb R}^{m}
$$ 
converges to the map $\pi_{R} : C \rightarrow {\mathbb R}^{m}$ 
with respect to the Hausdorff distance on compact sets. 
\end{theorem}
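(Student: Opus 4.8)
The plan is to combine the explicit Schottky-uniformization formulas for the imaginary normalized differentials $\omega_{R_i}^{\mathcal{R}_s}$ with a thick-thin decomposition of $\mathcal{R}_s$ that mirrors the vertex-edge structure of $G$. First I would recall, from the uniformization of the family $\{\mathcal{R}_s\}$ over a curve $C_0$ with dual graph $G$ in the sense of \cite{I1, I2}, that $\mathcal{R}_s$ is obtained by plumbing the normalizations $\widetilde{C}_0^{(v)}$ $(v\in V_G)$ of the components of $C_0$ along the nodes indexed by $E_G$, with plumbing parameters $y_e$ satisfying $\log|y_e| = \ell_s(e)\log s$. Correspondingly $\mathcal{R}_s$ is a union of \emph{thick parts} $\Theta_s^{(v)}$, each biholomorphic to a fixed finitely-punctured surface converging to $\widetilde{C}_0^{(v)}$, and \emph{necks} $N_e$, each a cylinder of conformal modulus comparable to $-\ell_s(e)\log s$; the marked points sit in half-open necks around the punctures.

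Next I would use the Poincar\'e-series formula for $\omega_{R_i}^{\mathcal{R}_s}$ established earlier in the paper --- the $\Gamma_s$-invariant third-kind differential with simple poles of residue $R_{ij}$ at the $j$-th marked point, corrected by a holomorphic differential so that all its $\mathcal{R}_s$-periods are purely imaginary --- and extract two asymptotic facts as $s\downarrow 0$: (a) on each $\Theta_s^{(v)}$, $\omega_{R_i}^{\mathcal{R}_s}$ converges uniformly on compacta to the corresponding imaginary-normalized third-kind differential on $\widetilde{C}_0^{(v)}$, so $\int_p^q \omega_{R_i}^{\mathcal{R}_s}$ stays bounded for $p,q$ in a fixed compact subset of $\Theta_s^{(v)}$; and (b) on each neck $N_e$, in the plumbing coordinate $z$ with $|y_e|\le|z|\le 1$, one has $\omega_{R_i}^{\mathcal{R}_s} = \bigl(c_e^{(i)}+o(1)\bigr)\,\tfrac{dz}{z}$, where $c_e^{(i)}\in\mathbb{R}$ is the residue at the node. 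Here $c_e^{(i)}$ is real because the loop around $N_e$ represents a homology class of $\mathcal{R}_s$ on which $\omega_{R_i}^{\mathcal{R}_s}$ has period $2\pi i\,c_e^{(i)}$, purely imaginary by the normalization when $e$ is a loop of $G$, and equal to $2\pi i$ times a partial sum of the real residues $R_{ij}$ when $e$ is a bridge. Consequently ${\rm Re}\!\int_{N_e}\omega_{R_i}^{\mathcal{R}_s} = c_e^{(i)}\log|y_e| + o(\log s) = c_e^{(i)}\,\ell_s(e)\log s + o(\log s)$.

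Then I would identify the limit fluxes with the tropical $1$-forms. Fixing $p$ in a thick part $\Theta_s^{(v_0)}$ and joining it to an arbitrary $q$ by a path crossing a sequence of necks, (a) and (b) give
$$
\frac{1}{\log s}\,{\rm Re}\!\int_p^q \omega_{R_i}^{\mathcal{R}_s} \;=\; \sum_{e} m_e(q)\,c_e^{(i)}\,\ell_s(e) \;+\; o(1),
$$
where $m_e(q)$ is the signed (possibly fractional, when $q$ lies inside a neck) crossing count; under the natural identification of $\mathcal{R}_s$ with a neighbourhood of $G$ this is exactly the value at the corresponding point of $C$ of the piecewise-linear function $\int_{v_0}^{\cdot}{\rm w}_{R_i}$, \emph{provided} the slope of ${\rm w}_{R_i}$ on each $e$ equals $c_e^{(i)}$. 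That equality follows from the uniqueness clause already in the statement: the family $\{c_e^{(i)}\}$ obeys Kirchhoff's current law at every vertex (the residue theorem on $\widetilde{C}_0^{(v)}$ in the limit, with external currents $R_{ij}$ at the leaves) and Kirchhoff's voltage law around every loop of $G$ (the corresponding $\mathcal{R}_s$-periods of $\omega_{R_i}^{\mathcal{R}_s}$ being purely imaginary, their real parts vanish), and these are precisely the conditions characterizing the slopes of the unique exact tropical $1$-form ${\rm w}_{R_i}^{C}$ with residue vector $R_i$.

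Finally, the Hausdorff statement follows by collapsing each $\Theta_s^{(v)}$ to the vertex $v$ and each $N_e$ onto the edge $e$ via $t = \log|z|/\log|y_e|\in[0,1]$: under $\tfrac1{\log s}\mathcal{A}_R$ the image of $\Theta_s^{(v)}$ shrinks to the single point $\pi_R(v)$, the image of $N_e$ converges to the segment $\pi_R(e)$, and the images of the leaf-ends converge to the rays of the leaves (where $\omega_{R_i}^{\mathcal{R}_s}$ has residue $R_{ij}$, so $\tfrac1{\log s}{\rm Re}\!\int$ grows with slope $R_{ij}$); intersecting with a fixed ball $K\subset\mathbb{R}^m$ makes the convergence uniform and yields $d_H\!\bigl(\tfrac1{\log s}\mathcal{A}_R(\mathcal{R}_s)\cap K,\ \pi_R(C)\cap K\bigr)\to 0$. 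The step I expect to be the main obstacle is (b) together with the uniformity in (a): one must bound, uniformly on compact subsets of $\mathcal{R}_s$ and uniformly in $s$, the difference between $\omega_{R_i}^{\mathcal{R}_s}$ and its leading local model, i.e. control the tails of the degenerating Poincar\'e series --- this is exactly where the reducible-degeneration theory of \cite{I1, I2} is needed and where the argument goes beyond Lang's Fenchel-Nielsen approach, which is confined to trivalent $G$.
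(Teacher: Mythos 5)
Your proposal is correct in outline and shares the paper's basic architecture --- isolate the neck/node regions where crossing contributes $c_e^{(i)}\log|y_e|$, show everything else is $O(1)$, and identify the limiting neck coefficients $c_e^{(i)}$ with the slopes of the exact tropical $1$-form --- but it takes a genuinely different route at the identification step, which is the heart of the matter. The paper never invokes uniqueness or Kirchhoff's laws: it writes the imaginary normalized differential explicitly as $\omega_{l_1,l_2}=\varpi_{l_1,l_2}-\bigl({\rm Re}\oint_{b_i}\varpi_{l_1,l_2}\bigr)_i\bigl({\rm Re}\oint_{b_j}\varpi_i\bigr)_{i,j}^{-1}(\varpi_i)_i^{T}$ (its Theorem 4.3), shows that $\tfrac1{\log s}$ times the real period data converges to the tropical period matrix $\bigl(\oint_{b_j}{\rm w}^{b_i}\bigr)$ (Corollaries 4.2 and 4.4), and observes that the exact tropical form ${\rm w}_{l_1,l_2}$ satisfies the verbatim analogue of this formula (Proposition 4.5); the coefficient matching in Corollary 4.6 is then a term-by-term comparison, and the uniform error control you flag as the main obstacle comes for free from the algebraic expansions of $\varpi_i,\varpi_{l_1,l_2}$ as power series in $y_e$ over $R_G[z,\prod_h(z-x_h)^{-1}]$ ([I2, Theorem 3.3]), which also yields the quantitative remainder $O(1)/\log s+\sum_e y_eO(1)$ reused in Theorem 1.2. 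Your soft argument --- current law from the residue theorem on each normalized component, weighted voltage law $\sum_e c_e^{(i)}\ell(e)=0$ from the vanishing of rescaled real periods, then uniqueness of the exact tropical $1$-form --- is valid and closer in spirit to Lang's, and it would prove the identification without ever inverting a period matrix; the price is that you must establish your steps (a) and (b) with uniformity in $s$ by some independent means, which is precisely the analytic content the paper's explicit Schottky formulas are designed to supply. Two smaller remarks: the paper first reduces by linearity in $R$ to the two-pole case $\omega_R=\omega_{l_1,l_2}$, which simplifies the bookkeeping of residues, and its treatment of points inside a neck is your modulus parameter $t=\log|z|/\log|y_e|$ in the guise of the interpolating map $\phi_{c,t}$; your Hausdorff-convergence paragraph is actually spelled out in more detail than the paper's.
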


\begin{theorem} {\rm (cf. \cite[Theorem 4]{L} for simple tropical curves)} 
Let $\{ \mathcal{R}_{s} \}_{s>0}$ be the above deformations of $C_{0}$ 
whose deformation parameters $y_{e}$ are given by $y_{e} = s^{\ell_{s}(e)} \cdot \theta_{s}(e)$ 
with $(\ell_{s}(e), \theta_{s}(e)) \in \mathbb{R}_{>0} \times S^{1}$. 
If $\lim_{s \downarrow 0} \ell_{s}(e) = \ell(e)$, 
and $\lim_{s \downarrow 0} \theta_{s}(e)$ exists as an element $\theta(e)$ of $S^{1}$, 
then for any $i = 1,..., m$ and oriented loop $\rho$ in $G$ given by the composition 
$e_{1} \cdots e_{n}$ of oriented edges, 
one has 
$$
\lim_{s \downarrow 0} \oint_{\rho_{s}} \omega_{R_{i}}^{\mathcal{R}_{s}} 
= \sum_{j=1}^{n} \log (\theta(e_{j})) \, {\rm w}_{R_{i}}^{C}(e_{j}), 
$$
where $\rho_{s}$ denotes the loop in $\mathcal{R}_{s}$ associated with $\rho$, 
and the branch of $\log$ is taken such that 
$\log: S^{1} \rightarrow [0, 2 \pi \sqrt{-1}) \subset \mathbb{C}$. 
\end{theorem}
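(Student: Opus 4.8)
\noindent\emph{Proof plan.} The strategy is to read $\oint_{\rho_s}\omega_{R_i}^{\mathcal{R}_s}$ off the explicit Schottky expansion of $\omega_{R_i}^{\mathcal{R}_s}$ established in Section 3, to isolate its logarithmic dependence on the deformation parameters $y_e$, and then to substitute $y_e = s^{\ell_s(e)}\theta_s(e)$ and pass to the limit $s\downarrow 0$. For the loop $\rho = e_1\cdots e_n$ the expansion should produce a period of the form
$$
\oint_{\rho_s}\omega_{R_i}^{\mathcal{R}_s}
\;=\; \sum_{j=1}^{n} {\rm w}_{R_i}^{C_s}(e_j)\,\log y_{e_j}
\;+\; \Phi_{R_i}\bigl(\{y_e\}_{e\in E_G}\bigr),
$$
where $C_s = (G,\ell_s)$ is the tropical curve carrying the \emph{current} lengths $\ell_s$, the form ${\rm w}_{R_i}^{C_s}$ is the associated exact $1$-form on $C_s$, and $\Phi_{R_i}$ is a convergent power series in the $y_e$ coming from the non-leading terms of the Schottky series, with $\Phi_{R_i}(0)=0$. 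The decisive structural fact here is that the period of $\omega_{R_i}^{\mathcal{R}_s}$ over the cycle shrinking to the node of $e$ equals $2\pi\sqrt{-1}\,{\rm w}_{R_i}^{C_s}(e)$: transported through the uniformization, the imaginary-normalization conditions become exactly the harmonicity-and-vanishing-loop-integral (Kirchhoff-type) system characterizing the exact tropical $1$-form on $(G,\ell_s)$.

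Granting this, the remainder is bookkeeping. Writing $\log y_e = \ell_s(e)\log s + \log(\theta_s(e))$, where $\log(\theta_s(e))$ is purely imaginary, one obtains
$$
\oint_{\rho_s}\omega_{R_i}^{\mathcal{R}_s}
\;=\; (\log s)\oint_{\rho}{\rm w}_{R_i}^{C_s}
\;+\; \sum_{j=1}^{n} \log(\theta_s(e_j))\,{\rm w}_{R_i}^{C_s}(e_j)
\;+\; \Phi_{R_i}\bigl(\{y_e\}\bigr),
$$
using $\oint_{\rho}{\rm w}_{R_i}^{C_s} = \sum_{j} \ell_s(e_j)\,{\rm w}_{R_i}^{C_s}(e_j)$. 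The first term vanishes identically because ${\rm w}_{R_i}^{C_s}$ is exact on $C_s$; this matches, in the loop case, the convergence in Theorem 1, and it is equally forced by ${\rm Re}\,\oint_{\rho_s}\omega_{R_i}^{\mathcal{R}_s}=0$ (otherwise the term would diverge while $\Phi_{R_i}$ stays bounded). In the second term, $\ell_s(e_j)\to\ell(e_j)$ gives ${\rm w}_{R_i}^{C_s}(e_j)\to{\rm w}_{R_i}^{C}(e_j)$ by continuity of the solution of the Kirchhoff-type system in the edge lengths, and $\theta_s(e_j)\to\theta(e_j)$ gives $\log(\theta_s(e_j))\to\log(\theta(e_j))$ for the chosen branch; finally $\Phi_{R_i}(\{y_e\})\to\Phi_{R_i}(0)=0$ since every $y_e\to 0$. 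Hence $\oint_{\rho_s}\omega_{R_i}^{\mathcal{R}_s}\to \sum_{j=1}^{n}\log(\theta(e_j))\,{\rm w}_{R_i}^{C}(e_j)$, as asserted.

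I expect the first paragraph to be the real obstacle: one must extract from the Section 3 formulas that the logarithmic coefficient along $\{y_e=0\}$ is \emph{exactly} the exact tropical form ${\rm w}_{R_i}^{C_s}(e)$ --- not merely its limit ${\rm w}_{R_i}^{C}(e)$ --- and that $\Phi_{R_i}$ converges, uniformly on the relevant compact sets, with $\Phi_{R_i}(0)=0$; this is where controlling the convergence of the Schottky product and matching the imaginary-normalization conditions against the tropical Kirchhoff system do the work, since the raw Poincar\'e series realizes only the combinatorial fluxes and it is the imaginary-normalizing correction by holomorphic differentials that produces ${\rm w}_{R_i}^{C_s}$ and cancels the residual cross-ratio constant. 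As a consistency check on $\Phi_{R_i}(0)=0$, specialize to $y_e>0$ with real auxiliary data: then $\mathcal{R}_s$ carries an anti-holomorphic involution $\sigma$ fixing the marked points, reality of $R$ forces $\overline{\sigma^{*}\omega_{R_i}^{\mathcal{R}_s}}=\omega_{R_i}^{\mathcal{R}_s}$, and $\rho_s$ is representable in the real locus, so $\oint_{\rho_s}\omega_{R_i}^{\mathcal{R}_s}$ is simultaneously real and purely imaginary, hence $0$; together with the vanishing of the other two terms this gives $\Phi_{R_i}=0$ there, consistent with the claim. A last, minor point: when $\theta(e)=1$ sits at the boundary of the branch interval $[0,2\pi\sqrt{-1})$, one reads $\log$ by taking $\arg\theta_s(e)\in[0,2\pi)$ and requiring the limit to be attained inside that interval.
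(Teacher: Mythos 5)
Your plan is correct and follows essentially the same route as the paper: the expansion you posit in your first paragraph is exactly what the paper establishes in Section 4.1 (Propositions 4.1--4.5, Corollary 4.6 and Theorem 4.7), namely that the coefficient of $\log y_{h}$ in each edge period is the exact tropical $1$-form produced by the imaginary-normalizing correction of $\varpi_{l_{1},l_{2}}$ by the $\varpi_{i}$, after which the divergent real part is killed by exactness (equivalently by imaginary normalization) and the surviving imaginary part gives $\sum_{j}\log(\theta(e_{j}))\,{\rm w}_{R_{i}}^{C}(e_{j})$. The only substantive difference is that you keep the running lengths $\ell_{s}$ and invoke continuity of the Kirchhoff-type system, whereas the paper reduces at the outset to $\ell_{s}(e)=\ell(e)$ and $\theta_{s}$ constant in $s$.
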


We hope that calculations in \cite[Section 2 and Proposition 3.1]{I2} 
will give precise evaluations of the above limit formulas.

\section{$1$-forms on tropical curves} 

We review results of \cite[Section 2.5]{L}. 
Let $C = (G, \ell)$ be a tropical curve of genus $g$ with $n$ leaves as above, 
and for each $h \in LE_{G} := L_{G} \cup E_{G}$, 
$v(h)$ denotes the boundary (resp. target) vertex of $h$ if $h \in L_{G}$ (resp. $h \in E_{G}$). 
A (tropical) $1$-form ${\rm w}$ on $C$ is a real-valued function on $LE_{G}$ 
such that for any $e \in E_{G}$, ${\rm w}(e) + {\rm w}(-e) = 0$, 
and that for any $v \in V_{G}$, 
all elements $h_{1},..., h_{n}$ of $LE_{G}$ with $v(h_{j}) = v$ satisfy 
$$
\sum_{j=1}^{n} {\rm w}(h_{j}) = 0.
$$
Denote by $\Omega(C)$ the vector space of $1$-forms on $C$, 
and for any leaf $l$ of $C$, 
define the residue of ${\rm w} \in \Omega(C)$ at $l$ to be the number ${\rm w}(l)$. 
Then an element ${\rm w} \in \Omega(C)$ is holomorphic if all its residues are zero, 
and denote by $\Omega_{\mathcal{H}}(C)$ 
the vector space of holomorphic $1$-forms on $C$. 

A path on a tropical curve $C = (G, \ell)$ is a continuous map $\rho: [0, 1] \rightarrow LE_{G}$ 
such that $\rho(0), \rho(1) \in V_{G}$ and that the restriction of $\rho$ to $[0, 1)$ is injective. 
In particular, 
a path can join a vertex of $G$ to an other, 
and a loop is a path $\rho$ satisfying $\rho(0) = \rho(1)$. 
Therefore, 
one can represent a path by the composition of ordered collection $h_{1} \cdots h_{n}$ 
of elements of $LE_{G}$ such that $v(h_{i}) = v(-h_{i+1})$, 
and for a path $\rho = h_{1} \cdots h_{n}$ and any $1$-form ${\rm w}$ on $C$, 
we define the integral
$$
\int_{\rho} {\rm w} := \sum_{j=1}^{n} \ell(h_{j}) {\rm w}(h_{j}). 
$$ 
A $1$-form ${\rm w} \in \Omega(C)$ is exact if $\int_{\rho} {\rm w} = 0$ 
for any loop $\rho$ in $G$, 
and the vector space of exact $1$-forms on $C$ is denoted by $\Omega_{0}(C)$. 
Furthermore, 
for a path from a leaf to another or a loop $\rho = h_{1} \cdots h_{n}$ in $C$, 
define the $1$-form ${\rm w}^{\rho} \in \Omega(C)$ dual to $\rho$ by
$$
{\rm w}^{\rho}(h) = \left\{ \begin{array}{ll} 
1 & \text{if $h \in {\rm Im}(\rho) := \{ h_{1},..., h_{n} \}$}, 
\\
-1 & \text{if $-h \in {\rm Im}(\rho)$}, 
\\
0 & \text{otherwise}. 
\end{array} \right. 
$$
Then it is shown in \cite[Proposition 2.27]{L} that 
if $C$ is a tropical curve of genus $g$ with $n \geq 2$ leaves, then 
\begin{itemize}

\item 
the sum of the residues of any $1$-form $w \in \Omega(C)$ is zero, 

\item 
$\dim_{\mathbb{R}} \Omega_{0}(C) = n - 1$, 
$\dim_{\mathbb{R}} \Omega_{\mathcal{H}}(C) = g$ and 
$\Omega(C) = \Omega_{0}(C) \oplus \Omega_{\mathcal{H}}(C)$, 

\item 
any element of $\Omega_{0}(C)$ is determined by its residues.

\end{itemize}
For each real vector $R = (r_{l})_{l \in L}$ with $|L| = n$ entries such that 
$\sum_{l \in L} r_{l} = 0$, 
denote by ${\rm w}_{R}$ the unique exact $1$-form on $C$ whose residue vector is $R$, 
namely, ${\rm w}_{R}(l) = r_{l}$ for all $l \in L$.

\section{Schottky uniformization and Abelian differentials}

We review results of \cite[Sections 2 and 3]{I2}. 

\subsection{Schottky uniformized deformations}

Let $G = (V_{G}, E_{G}, L_{G})$ be a connected graph. 
We attach variables $x_{h}$ for $h \in LE_{G}$, 
and $y_{e} = y_{-e}$ for each $e \in E_{G}$. 
Denote by $R_{G}$ the ${\mathbb Z}$-algebra generated by $x_{h}$ $(h \in LE_{G})$, 
$(x_{e} - x_{-e})^{-1}$ $(e \in E_{G})$ and 
$(x_{h} - x_{h'})^{-1}$ ($h, h' \in LE_{G}$ with $h \neq h'$, $v(h) = v(h')$), 
and put 
$$
A_{G} \ = \ R_{G} [[y_{e} \, (e \in E_{G})]], \ \ 
B_{G} \ = \ A_{G} \left[ \prod_{e \in E_{G}} y_{e}^{-1} \right]. 
$$ 
Then the ring $A_{G}$ is complete with respect to its ideal $I_{G}$ 
generated by $y_{e}$ $(e \in E_{G})$. 
 
In a similar way to \cite[Section 2]{I1}, 
we construct the universal Schottky group $\Gamma_{G}$ associated with $G$ as follows. 
For $e \in E_{G}$, 
denote by $\phi_{e}$ an element of $GL_{2}(B_{G})$ 
and hence of $PGL_{2}(B_{G}) = GL_{2}(B_{G})/B_{G}^{\times}$ defined as 
$$
\phi_{e} = \left( \begin{array}{cc} x_{e} & x_{-e} \\ 1 & 1 \end{array} \right) 
\left( \begin{array}{cc} 1 & 0 \\ 0 & y_{e} \end{array} \right) 
\left( \begin{array}{cc} x_{e} & x_{-e} \\ 1 & 1 \end{array} \right)^{-1} 
\quad {\rm mod} \left( B_{G}^{\times} \right) 
$$
which is equivalent to 
$$
\left[ \phi_{e}(z), z; x_{e}, x_{-e} \right] = y_{e} \quad (z \in \mathbb{P}^{1}), 
\eqno(3.1) 
$$
where $[a, b; c, d]$ denotes the cross ratio defined as 
$$
[a, b; c, d] = \frac{(a-c)(b-d)}{(a-d)(b-c)}. 
$$
For any reduced path $\rho = e_{1} \cdots e_{n}$ 
which is the product of oriented edges $e_{1}, ... , e_{n}$, 
one can associate an element $\rho^{*}$ of $PGL_{2}(B_{G})$ 
having reduced expression $\phi_{e_{n}} \cdots \phi_{e_{1}}$. 
Fix a base vertex $v_{\rm B}$ of $V_{G}$, 
and consider the fundamental group 
$\pi_{1} (G; v_{\rm B})$ which is a free group 
of rank $g = {\rm rank}_{\mathbb Z} H_{1}(G, {\mathbb Z})$. 
Then the correspondence $\rho \mapsto \rho^{*}$ 
gives an injective anti-homomorphism 
$\pi_{1} (G; v_{\rm B}) \rightarrow PGL_{2}(B_{G})$ 
whose image is denoted by $\Gamma_{G}$. 

It is shown in \cite[Section 3]{I1} that if a graph $G$ is of $(g, n)$-type, 
namely, ${\rm rank} \, H_{1}(G, \mathbb{Z}) = g$ and $|L| = n$, 
then there exists a family ${\mathcal R}_{G}$ of $n$-pointed complex curves of genus $g$ 
by taking $x_{h}$ $(h \in LE_{G})$ and $y_{e}$ $(e \in E_{G})$ as complex numbers such that 
$$
x_{e} \neq x_{-e} \ (e \in E_{G}), 
\quad x_{h} \neq x_{h'} \ (\text{$h \neq h'$ and $v(h) = v(h')$}), 
$$
and that $y_{e}$'s are sufficiently small. 
This family satisfies the following properties: 

\begin{itemize}

\item 
the closed fiber of ${\mathcal R}_{G}$ obtained by putting $y_{e} = 0$ $(e \in E_{G})$ 
becomes the degenerate pointed complex curve with dual graph $G$. 
More precisely, this curve is obtained from the collection of $P_{v} := \mathbb{CP}^{1}$ 
$(v \in V_{G})$ with marked points $x_{h}$ for $h \in L_{G}$ with $v(h) = v$ 
by identifying the points $x_{e} \in P_{v(e)}$ and $x_{-e} \in P_{v(-e)}$ ($e \in E_{G}$), 
where $x_{h}$ $(h \in LE_{G})$ are identified with the corresponding points on $P_{v(h)}$, 

\item 
the family ${\mathcal R}_{G}$ gives rise to a deformation of degenerate pointed curves with dual graph $G$, 
and any member of ${\mathcal R}_{G}$ for $y_{e} \neq 0$ $(e \in E_{G})$ is smooth, 
namely, a pointed Riemann surface and is uniformized by the Schottky group $\Gamma$ 
obtained from $\Gamma_{G}$ taking $x_{h}, y_{e}$ as complex numbers as above. 

\end{itemize}

\subsection{Abelian differentials}

For a graph $G$ of $(g, n)$-type, 
we define the universal Abelian differentials (of the first, third kinds) 
on the above family ${\mathcal R}_{G}$ of $n$-pointed complex curves of genus $g$ 
which are Schottky uniformized by $\Gamma$. 
Fix a set $\{ \gamma_{1},..., \gamma_{g} \}$ of generators of $\Gamma$, 
and for each $\gamma_{i}$, 
denote by $\alpha_{i}$ (resp. $\alpha_{-i}$) its attractive (resp. repulsive) fixed points, 
and by $\beta_{i}$ its multiplier which satisfy 
$$
\left[ \gamma_{i}(z), z; \alpha_{i}, \alpha_{-i} \right] = \beta_{i}. 
$$ 
Then for each $i = 1,..., g$, 
we define the associated {\it universal Abelian differential of the first kind} as 
$$
\varpi_{i} = 
\sum_{\gamma \in \Gamma / \left\langle \gamma_{i} \right\rangle} 
\left( \frac{1}{z - \gamma(\alpha_{i})} - \frac{1}{z - \gamma(\alpha_{-i})} \right) dz. 
$$
Take a maximal subtree $T_{G}$ of $G$, 
and for each $l \in L_{G}$, 
denote by $\rho_{l} = e_{1} \cdots e_{n}$ the unique path in $T_{G}$ from $v(l)$ to $v_{\rm B}$ 
and put $\rho_{l}^{*} = \phi_{e_{n}} \cdots \phi_{e_{1}}$. 
Then for each $l_{1}, l_{2} \in L_{G}$ with $l_{1} \neq l_{2}$, 
we define the associated {\it universal Abelian differential of the the third kind} as 
$$
\varpi_{l_{1}, l_{2}} = \sum_{\gamma \in \Gamma} 
\left( \frac{d \gamma(z)}{\gamma(z) - \rho_{l_{1}}^{*}(x_{l_{1}})} - 
\frac{d \gamma(z)}{\gamma(z) - \rho_{l_{2}}^{*}(x_{l_{2}})} \right). 
$$

A  (holomorphic or meromorphic) global section of the dualizing sheaf on a stable curve 
is called a {\it stable differential} (cf. \cite{DM}). 
Then the following assertions are shown in \cite[Section 3]{I2}: 
\begin{itemize}

\item 
the functions $\varpi_{i}/dz$ and $\varpi_{l_{1},l_{2}}/dz$ of $z$ are expanded 
as power series in $y_{e}$ $(e \in E_{G})$ with coefficients in 
$R_{G} \left[ z, \prod_{h \in LE_{G}} (z - x_{h})^{-1} \right]$, 

\item 
the set $\{ \varpi_{i} \}_{1 \leq i \leq g}$ gives a basis of relative stable differentials 
on $\mathcal{R}_{G}$ which are holomorphic, 

\item 
for each $l_{1}, l_{2} \in L_{G}$ such that $l_{1} \neq l_{2}$, 
$\varpi_{l_{1}, l_{2}}$ is a relative stable differential on $\mathcal{R}_{G}$  
whose poles are simple at $x_{l_{1}} \in P_{v(l_{1})}$ (resp. $x_{l_{2}} \in P_{v(l_{2})}$) 
with residues $1$ (resp. $-1$), 

\item 
the Jacobian of $\mathcal{R}_{G}$ for $y_{e} \neq 0$ $(e \in E_{G})$ becomes 
the family of Abelian varieties whose multiplicative periods are given in \cite[3.10]{I1} 
as $P_{ij} = \prod_{\gamma} \psi_{ij}(\gamma)$ $(1 \leq i, j \leq g)$, 
where $\gamma$ runs through all representatives of 
$\langle \gamma_{i} \rangle \backslash \Gamma / \langle \gamma_{j} \rangle$ 
and 
$$
\psi_{ij}(\gamma) = \left\{ \begin{array}{ll} 
\beta_{i} 
& (\mbox{$i = j$ and $\gamma \in \langle \gamma_{i} \rangle$}), 
\\
\left[ \alpha_{i}, \alpha_{-i}; \gamma(\alpha_{j}), \gamma(\alpha_{-j}) \right] 
& (\mbox{otherwise}). 
\end{array} \right. 
$$ 
Furthermore, $P_{ij}$ are expanded as elements of $B_{G}$, 

\item
take a canonical (first) homology basis $\{ a_{i}, b_{i} \}_{1 \leq i \leq g}$ in $\mathcal{R}_{G}$ 
such that each $b_{i}$ corresponds to $\gamma_{i}^{-1}$ and 
each $a_{i}$ goes around an edge in $G \setminus T_{G}$.    
Then 
$$
\oint_{a_{i}} \varpi_{j} = 2 \pi \sqrt{-1} \delta_{ij}, 
\quad \exp \left( \oint_{b_{i}} \varpi_{j} \right) = P_{ij}, 
$$  
where $\delta_{ij}$ denotes the Kronecker delta.

\end{itemize}

\section{General tropical limits of harmonic amoebas} 

Let $C = (G, \ell)$ be a tropical curve, 
and denote by $\mathcal{R}_{G}$ the family of $n$-pointed complex curves of genus $g$ 
associated with $G$, 
where $G$ is of $(g, n)$-type. 
Recall that $\mathcal{R}_{G}$ gives deformations of a degenerate complex curve with dual graph $G$ 
by the parameters $y_{e}$ $(e \in E)$. 
For sufficiently small positive numbers $s$, 
we consider the deformations $\{ \mathcal{R}_{s} \}_{s>0}$ satisfying $|y_{e}| = s^{\ell(e)}$. 
For each $e \in E_{G}$, 
take an open neighborhood $U_{e}$ of $x_{e}$ in $P_{v(e)}$ such that 
$\phi_{e}$ maps the closure of $U_{-e}$ onto $P_{v(e)} \setminus U_{e}$, 
and for $v \in V_{G}$, 
put $F_{v} = P_{v} \setminus \bigcup_{e \in E, v(e) = v} U_{e}$. 
Then the domain $\Omega_{\Gamma}$ of discontinuity for $\Gamma$ is obtained as 
$$
\Omega_{\Gamma} = \bigcup_{\rho} \varphi_{\rho}(F_{v}) \subset P_{v_{\rm B}}, 
$$
where $\rho$ runs through paths from any $v \in V_{G}$ to $v_{\rm B}$, 
and hence $\mathcal{R}_{s}$ is expressed as the quotient space 
$\Omega_{\Gamma}/\Gamma$. 

In what follows, 
fix a maximal subtree $T_{G}$ of $G$, 
and for $l_{1}, l_{2} \in L_{G}$ with $l_{1} \neq l_{2}$, 
denote by $\rho_{l_{1},l_{2}}$ the unique path in the tree $T_{G}$ from $l_{1}$ to $l_{2}$. 

\subsection{Imaginary normalized differentials}

Let $\varpi_{i}$ ($1 \leq i \leq g$), 
$\varpi_{l_{1},l_{2}}$ ($l_{1}, l_{2} \in L_{G}$ with $l_{1} \neq l_{2}$) 
be the above universal Abelian differentials on $\mathcal{R}_{s}$. 

\begin{proposition} 

{\rm (1)} 
For each oriented edge $h \in E_{G}$, 
\begin{eqnarray*}
\varpi_{i} & = & {\rm w}^{b_{i}}(h) 
\left( \frac{1}{z - x_{h}} - \frac{1}{z - x_{-h}} \right) dz + \left( \sum_{e \in E_{G}} y_{e} \, O(1) \right) dz, 
\\ 
\varpi_{l_{1},l_{2}} & = & {\rm w}^{\rho_{l_{1},l_{2}}}(h) 
\left( \frac{1}{z - x_{h}} - \frac{1}{z - x_{-h}} \right) dz + \left( \sum_{e \in E_{G}} y_{e} \, O(1) \right) dz 
\end{eqnarray*}
around $z = x_{h}, x_{-h}$. 

{\rm (2)}
For a point $a \in \Omega_{\Gamma}$ and each oriented edge $h \in E_{G}$,  
\begin{eqnarray*}
\int_{a}^{\phi_{h}(a)} \varpi_{i} 
& = & \log (y_{h}) \, {\rm w}^{b_{i}}(h) + \sum_{e \in E_{G}} y_{e} \, O(1), 
\\
\int_{a}^{\phi_{h}(a)} \varpi_{l_{1},l_{2}} 
& = & \log (y_{h}) \, {\rm w}^{\rho_{l_{1},l_{2}}}(h) + \sum_{e \in E_{G}} y_{e} \, O(1). 
\end{eqnarray*}

{\rm (3)}
For a point $a \in \Omega_{\Gamma}$ and each oriented edge $h \in E_{G}$,  
\begin{eqnarray*}
{\rm Re} \left( \int_{a}^{\phi_{h}(a)} \varpi_{i} \right) 
& = & 
(\log s) \ell(h) {\rm w}^{b_{i}}(h) + O(1), 
\\ 
{\rm Re} \left( \int_{a}^{\phi_{h}(a)} \varpi_{l_{1},l_{2}} \right) 
& = & 
(\log s) \ell(h) {\rm w}^{\rho_{l_{1},l_{2}}}(h) + O(1). 
\end{eqnarray*}

\end{proposition}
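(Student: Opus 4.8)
The plan is to deduce everything from part (1), which is the \emph{heart} of the statement; parts (2) and (3) then follow by a single cross--ratio computation and crude size estimates. For (1) I would work directly from the explicit power--series expansions in the deformation parameters $y_{e}$ of $\varpi_{i}/dz$ and $\varpi_{l_{1},l_{2}}/dz$ recalled in Section 3.2 (from \cite[Section 3]{I2}), whose coefficients are rational functions of $z$ with poles only at the points $x_{h}$ ($h\in LE_{G}$), and extract, for a fixed oriented edge $h$, the behaviour near the two branches $z=x_{h}$ and $z=x_{-h}$ of the node attached to $h$. Setting all $y_{e}=0$ gives the restriction of the universal differential to the closed fiber $C_{0}$: a holomorphic stable differential on the nodal curve with dual graph $G$, so that its residue at the branch $x_{e}$ of each node is well defined and, by the residue theorem on each $P_{v}\cong\mathbb{CP}^{1}$, the assignment $e\mapsto{\rm Res}_{x_{e}}(\varpi_{\bullet}|_{y=0})$ is a $1$-form on $C$ in the sense of Section 2.

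Next I would identify this $1$-form. For $\varpi_{i}$, the normalization $\oint_{a_{j}}\varpi_{i}=2\pi\sqrt{-1}\,\delta_{ij}$, together with the fact that $a_{j}$ degenerates to a small loop around the node of the $j$-th edge of $G\setminus T_{G}$, forces its residue there to be $\delta_{ij}$; since a $1$-form on $G$ satisfying the vertex relations is a cycle and is determined by its values on the non--tree edges, the residue $1$-form of $\varpi_{i}|_{y=0}$ must be ${\rm w}^{b_{i}}$, the $1$-form dual to the loop $b_{i}$, whence its residue at $x_{h}$ is ${\rm w}^{b_{i}}(h)$. For $\varpi_{l_{1},l_{2}}$ the same reasoning applies once one records that its residues at the marked leaves are $1$ at $l_{1}$, $-1$ at $l_{2}$ and $0$ elsewhere and that (being the third--kind differential written as a sum over all of $\Gamma$) its $a$-periods vanish, so the residue $1$-form is supported on $T_{G}$ with those leaf residues, i.e.\ it is ${\rm w}^{\rho_{l_{1},l_{2}}}$, giving residue ${\rm w}^{\rho_{l_{1},l_{2}}}(h)$ at $x_{h}$. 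Finally I would feed this back into the Schottky series $\varpi_{i}=\sum_{\gamma\in\Gamma/\langle\gamma_{i}\rangle}\bigl(\frac{1}{z-\gamma(\alpha_{i})}-\frac{1}{z-\gamma(\alpha_{-i})}\bigr)dz$ and $\varpi_{l_{1},l_{2}}=\sum_{\gamma\in\Gamma}(\cdots)$: tracking the $y$-dependence of the images $\gamma(\alpha_{\pm i})$ and $\rho_{l_{j}}^{*}(x_{l_{j}})$, one sees that near the pair $\{x_{h},x_{-h}\}$ all but the relevant leading contribution carries a positive power of some $y_{e}$, so that the series equals ${\rm w}^{b_{i}}(h)\bigl(\frac{1}{z-x_{h}}-\frac{1}{z-x_{-h}}\bigr)dz$ (resp.\ the analogue with ${\rm w}^{\rho_{l_{1},l_{2}}}(h)$) up to a term of the form $\bigl(\sum_{e\in E_{G}}y_{e}\,O(1)\bigr)dz$; this is (1).

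For (2) I would fix a path from $a$ to $\phi_{h}(a)$ lying in the region near $x_{h}$ on which (1) applies --- possible because the cross--ratio relation (3.1) makes $\phi_{h}$ a strong contraction toward $x_{h}$, so $a$ and $\phi_{h}(a)$ are joined by a short arc avoiding $x_{h}$ and $x_{-h}$ --- and write $\varpi_{i}={\rm w}^{b_{i}}(h)\,d\log\!\frac{z-x_{h}}{z-x_{-h}}+\eta$ with $\eta=\bigl(\sum_{e}y_{e}\,O(1)\bigr)dz$ holomorphic along the arc. Integrating the logarithmic term and invoking the identity $[\phi_{h}(a),a;x_{h},x_{-h}]=y_{h}$ of (3.1) gives $\int_{a}^{\phi_{h}(a)}{\rm w}^{b_{i}}(h)\,d\log\!\frac{z-x_{h}}{z-x_{-h}}={\rm w}^{b_{i}}(h)\log(y_{h})$ for the branch carried along the arc, while $\int_{a}^{\phi_{h}(a)}\eta=\sum_{e}y_{e}\,O(1)$ since the arc has bounded length and $\eta$ is bounded; this is the first identity of (2), and replacing ${\rm w}^{b_{i}}(h)$ by ${\rm w}^{\rho_{l_{1},l_{2}}}(h)$ gives the second.

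For (3) I would take real parts in (2): ${\rm w}^{b_{i}}(h)$ and ${\rm w}^{\rho_{l_{1},l_{2}}}(h)$ are real, ${\rm Re}(\log(y_{h}))=\log|y_{h}|=\ell(h)\log s$ because $|y_{h}|=s^{\ell(h)}$, and ${\rm Re}\bigl(\sum_{e}y_{e}\,O(1)\bigr)$ is bounded --- in fact $o(1)$ as $s\downarrow0$, since each $|y_{e}|=s^{\ell(e)}\to0$ --- hence $O(1)$; this yields both displayed identities. The main obstacle is part (1): although conceptually one is only reading off the leading term of an already explicit series, making rigorous both the identification of the residue at $x_{h}$ with the combinatorial quantity ${\rm w}^{b_{i}}(h)$ (resp.\ ${\rm w}^{\rho_{l_{1},l_{2}}}(h)$) and the precise $\bigl(\sum_{e}y_{e}\,O(1)\bigr)dz$ shape of the remainder requires careful bookkeeping of the expansions of \cite{I1,I2}, in particular control of the $y$-dependence of the fixed points $\alpha_{\pm i}$ and of the points $\rho_{l_{j}}^{*}(x_{l_{j}})$, and the verification that all but finitely many group elements contribute only $O(y)$ near $\{x_{h},x_{-h}\}$.
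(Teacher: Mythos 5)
Your proposal is correct and follows essentially the same route as the paper: the paper simply cites \cite[Theorem 3.3]{I2} for part (1), obtains (2) from (1) together with the cross-ratio relation (3.1) exactly as in your $d\log$ computation, and gets (3) by taking real parts using that ${\rm w}^{b_{i}}(h)$ and ${\rm w}^{\rho_{l_{1},l_{2}}}(h)$ are real and $|y_{h}| = s^{\ell(h)}$. The only difference is that you reconstruct a proof sketch of (1) (identifying the residue $1$-form of the closed fiber via the $a$-period normalization), which the paper outsources entirely to the cited reference.
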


\begin{proof}
The assertion (1) is shown in \cite[Theorem 3.3]{I2}, 
(2) follows from (1) and (3.1), 
and (3) follows from (2) since ${\rm w}^{b_{i}}(h), {\rm w}^{\rho_{l_{1},l_{2}}}(h) \in \mathbb{R}$. 
\end{proof}

\begin{corollary}
One has 
$$
{\rm Re} \left( \oint_{b_{i}} \varpi_{j} \right) 
= \log s \int_{b_{i}} {\rm w}^{b_{j}} + O(1), 
\quad 
{\rm Re} \left( \oint_{b_{i}} \varpi_{l_{1},l_{2}} \right) 
= \log s \int_{b_{i}} {\rm w}^{\rho_{l_{1},l_{2}}}(h) + O(1). 
$$
\end{corollary}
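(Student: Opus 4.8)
The proof will realize the closed-loop integral $\oint_{b_{i}}$ as a finite sum of the one-edge integrals $\int_{a}^{\phi_{h}(a)}$ already controlled by Proposition 4.1(3). First I would invoke the dictionary between $\Gamma$ and $\pi_{1}(G; v_{\rm B})$ recalled in Section 3: the cycle $b_{i}$ corresponds to $\gamma_{i}^{-1}$, and if $\rho = e_{1} \cdots e_{k}$ is the reduced loop in $G$ representing this class, then $\gamma_{i}^{-1}$ has reduced expression $\phi_{e_{k}} \cdots \phi_{e_{1}}$ in $\Gamma$. Since $\mathcal{R}_{s} = \Omega_{\Gamma}/\Gamma$, the loop $b_{i}$ lifts to a path in $\Omega_{\Gamma}$ from a chosen point $a = a_{0} \in F_{v_{\rm B}}$ to $\gamma_{i}^{-1}(a) = a_{k}$, where $a_{t} := \phi_{e_{t}}(a_{t-1})$ for $t = 1, \dots, k$.

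Next, since $\varpi_{j}$ (respectively $\varpi_{l_{1},l_{2}}$) is a relative stable differential on $\mathcal{R}_{s}$ whose only poles are the simple ones at marked points, which $b_{i}$ avoids, its period $\oint_{b_{i}} \varpi_{j}$ is computed, as in the period calculations underlying \cite[Section 3]{I2}, by integrating a lift of $b_{i}$ in $\Omega_{\Gamma}$ from $a = a_{0}$ to $\gamma_{i}^{-1}(a) = a_{k}$; by the reduced expression this telescopes as
$$
\oint_{b_{i}} \varpi_{j} = \sum_{t=1}^{k} \int_{a_{t-1}}^{\phi_{e_{t}}(a_{t-1})} \varpi_{j}.
$$
Applying Proposition 4.1(3) to each summand, with base point $a_{t-1} \in \Omega_{\Gamma}$ and oriented edge $e_{t}$, gives ${\rm Re}\bigl(\int_{a_{t-1}}^{\phi_{e_{t}}(a_{t-1})} \varpi_{j}\bigr) = (\log s)\, \ell(e_{t})\, {\rm w}^{b_{j}}(e_{t}) + O(1)$, and likewise with ${\rm w}^{\rho_{l_{1},l_{2}}}$ in place of ${\rm w}^{b_{j}}$ for $\varpi_{l_{1},l_{2}}$. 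Summing over $t = 1, \dots, k$ and noting that $k$, the number of edges of the fixed loop $\rho$, does not depend on $s$, so that the $k$ error terms combine into a single $O(1)$, one obtains ${\rm Re}\bigl(\oint_{b_{i}} \varpi_{j}\bigr) = (\log s) \sum_{t=1}^{k} \ell(e_{t})\, {\rm w}^{b_{j}}(e_{t}) + O(1)$. By the definition of integration of tropical $1$-forms in Section 2, this sum equals $\int_{\rho} {\rm w}^{b_{j}} = \int_{b_{i}} {\rm w}^{b_{j}}$, and similarly $\int_{b_{i}} {\rm w}^{\rho_{l_{1},l_{2}}}$ in the third-kind case, which is the asserted identity.

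The step demanding the most care is the uniformity of the $O(1)$ terms. Proposition 4.1(3) is stated for a single base point, whereas the $a_{1}, \dots, a_{k-1}$ here are themselves $\phi$-iterates and vary with $s$ through the parameters $y_{e}$. I would argue that they nonetheless stay in a fixed compact subset of $P_{v_{\rm B}}$, bounded away from the repulsive points $x_{-e_{t}}$, because the translates $\varphi_{\rho'}(F_{v})$ shrink towards attractive fixed points as the words $\rho'$ lengthen; hence the implied constants in Proposition 4.1(3), which depend continuously on the base point and on the finitely many data $x_{h}$, remain bounded as $s \downarrow 0$. Equivalently, one may take each $a_{t}$ inside the $\Gamma$-translate of $F_{v(e_{t})}$ attached to the initial segment $e_{1} \cdots e_{t}$, for which the power-series estimates of \cite[Section 3]{I2} underlying Proposition 4.1 apply verbatim and uniformly. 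One should also fix orientation conventions so that the loop $\rho$ attached to $\gamma_{i}^{-1}$ is exactly $b_{i}$ with its chosen orientation; this only affects overall signs and is taken care of by the correspondence $b_{i} \leftrightarrow \gamma_{i}^{-1}$.
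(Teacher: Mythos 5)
Your proposal is correct and is exactly the argument the paper intends: the paper states this corollary without a written proof, as an immediate consequence of Proposition 4.1(3) obtained by lifting $b_{i}$ to the path from $a$ to $\gamma_{i}^{-1}(a)$ in $\Omega_{\Gamma}$, telescoping over the reduced word $\phi_{e_{k}}\cdots\phi_{e_{1}}$, and matching the resulting sum $\sum_{t}\ell(e_{t})\,{\rm w}^{b_{j}}(e_{t})$ with the tropical integral $\int_{b_{i}}{\rm w}^{b_{j}}$ from Section 2. Your additional care about the uniformity of the $O(1)$ constants over the finitely many, $s$-dependent base points $a_{t}$ is a reasonable point that the paper glosses over, and your handling of it is consistent with the estimates of \cite[Section 3]{I2}.
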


\begin{theorem}
Denote by $\omega_{l_{1},l_{2}}$ the unique imaginary normalized differential on $\mathcal{R}_{s}$ 
whose poles are simple at $\varphi_{l_{1}}(x_{l_{1}}), \varphi_{l_{2}}(x_{l_{2}})$ with residues $1, -1$ respectively. 
Then one has 
$$
\omega_{l_{1},l_{2}} = \varpi_{l_{1},l_{2}} - \sum_{i=1}^{g}  
{\rm Re} \left( \oint_{b_{i}} \varpi_{l_{1},l_{2}} \right)_{1 \leq i \leq g} 
\left( {\rm Re} \left( \oint_{b_{j}} \varpi_{i} \right)_{1 \leq i,j \leq g} \right)^{-1} (\varpi_{i})_{i}^{T}. 
$$
\end{theorem}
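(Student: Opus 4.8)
The plan is to check that the right-hand side, which I denote $\widetilde{\omega}$, is an imaginary normalized differential on $\mathcal{R}_{s}$ with the same poles and residues as $\omega_{l_{1},l_{2}}$, and then to invoke uniqueness of such a differential. Uniqueness is standard: if $\eta,\eta'$ are imaginary normalized with equal polar parts, then $\eta-\eta'$ extends to a holomorphic differential on the compact surface $\mathcal{R}_{s}$ all of whose periods are purely imaginary, so $\mathrm{Re}(\eta-\eta')$ is a harmonic $1$-form with vanishing periods, hence exact, hence $0$; a holomorphic differential with vanishing real part vanishes, so $\eta=\eta'$. Thus it suffices to verify (a) that $\widetilde{\omega}$ has simple poles at the images of $x_{l_{1}},x_{l_{2}}$ with residues $1,-1$ and no others, and (b) that every period of $\widetilde{\omega}$ is purely imaginary.

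For this, unwind the matrix notation and write $\widetilde{\omega}=\varpi_{l_{1},l_{2}}-\sum_{k=1}^{g}d_{k}\,\varpi_{k}$, where the vector $d=(d_{k})$ is characterized by the linear system $\sum_{k=1}^{g}d_{k}\,\mathrm{Re}\bigl(\oint_{b_{j}}\varpi_{k}\bigr)=\mathrm{Re}\bigl(\oint_{b_{j}}\varpi_{l_{1},l_{2}}\bigr)$ for $1\le j\le g$. Its coefficient matrix $N=\bigl(\mathrm{Re}\oint_{b_{j}}\varpi_{k}\bigr)_{1\le j,k\le g}$ has real entries, so provided $N$ is invertible the solution $d$ lies in $\mathbb{R}^{g}$. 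Invertibility is where the geometry of $\mathcal{R}_{s}$ enters: since $\oint_{a_{i}}\varpi_{j}=2\pi\sqrt{-1}\,\delta_{ij}$, the matrix $\tau=\bigl(\frac{1}{2\pi\sqrt{-1}}\oint_{b_{i}}\varpi_{j}\bigr)_{i,j}$ is the Riemann period matrix of $\mathcal{R}_{s}$ — here one uses that the $y_{e}$ are honest small complex numbers, so $\mathcal{R}_{s}$ is a genuine Riemann surface — hence it is symmetric with $\mathrm{Im}(\tau)$ positive definite, so $N=-2\pi\,\mathrm{Im}(\tau)$ is symmetric, negative definite, in particular invertible (and the symmetry makes the transpose appearing in the statement immaterial). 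This legitimizes the formula; moreover, since by the properties recalled in Section~3.2 each $\varpi_{k}$ is holomorphic on $\mathcal{R}_{s}$ while $\varpi_{l_{1},l_{2}}$ has simple poles at the images of $x_{l_{1}},x_{l_{2}}$ with residues $1,-1$, subtracting the holomorphic combination $\sum_{k}d_{k}\varpi_{k}$ leaves these poles and residues unchanged, which is (a).

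For (b), test the periods over generators of $H_{1}$ of $\mathcal{R}_{s}$ with the two poles removed: the cycles $a_{k}$, the cycles $b_{k}$, and a small loop around a pole. Around a pole the period of $\widetilde{\omega}$ is $2\pi\sqrt{-1}$ times the residue $\pm 1$, purely imaginary. For $b_{j}$ one has $\mathrm{Re}\oint_{b_{j}}\widetilde{\omega}=\mathrm{Re}\oint_{b_{j}}\varpi_{l_{1},l_{2}}-\sum_{k}d_{k}\,\mathrm{Re}\oint_{b_{j}}\varpi_{k}=0$ by the defining system for $d$, so $\oint_{b_{j}}\widetilde{\omega}$ is purely imaginary. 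For $a_{k}$ one has $\oint_{a_{k}}\widetilde{\omega}=\oint_{a_{k}}\varpi_{l_{1},l_{2}}-2\pi\sqrt{-1}\,d_{k}$, which is purely imaginary as soon as $\oint_{a_{k}}\varpi_{l_{1},l_{2}}$ is; and in fact $\oint_{a_{k}}\varpi_{l_{1},l_{2}}=0$, since the Poincaré series defining $\varpi_{l_{1},l_{2}}$ is $\Gamma$-automorphic and the isometric circle representing $a_{k}$ separates the base copies of its two poles from its interior, so the remaining enclosed poles occur in $\Gamma$-translate pairs with canceling residues $\pm 1$ — this is built into the Schottky construction of \cite{I1,I2}. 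Combining (a), (b) and the uniqueness statement identifies $\widetilde{\omega}$ with $\omega_{l_{1},l_{2}}$.

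I expect the delicate points to be the invertibility of the real-period matrix $N$, which routes through the Riemann bilinear relations for $\mathcal{R}_{s}$ together with a bookkeeping check that the transposed matrix written in the statement is the consistent one, and the vanishing of the $a_{k}$-periods of $\varpi_{l_{1},l_{2}}$, which should be read off from the automorphy of the Poincaré series in \cite{I1,I2} rather than re-derived.
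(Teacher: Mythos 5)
Your proof follows essentially the same route as the paper: you check that the right-hand side has purely imaginary $a$- and $b$-periods (the $a$-periods via the residue theorem on $\Omega_{\Gamma}\subset\mathbb{CP}^{1}$, the $b$-periods by construction of the real coefficient vector) and the same simple poles and residues, then invoke uniqueness of the imaginary normalized differential. Your additional verifications — the uniqueness argument via harmonic $1$-forms and the invertibility of the real period matrix through $\mathrm{Im}(\tau)$ being positive definite — are correct fillings-in of steps the paper leaves implicit (the latter is only addressed there in Corollary 4.4).
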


\begin{proof}
Since the line integral on $\mathcal{R}_{s}$ around $a_{i}$ is a loop integral on the domain 
$\Omega_{\Gamma} \subset \mathbb{CP}^{1}$, 
$$
{\rm Re} \left( \oint_{a_{i}} \varpi_{j} \right) = {\rm Re} \left( \oint_{a_{i}} \varpi_{l_{1},l_{2}} \right) = 0. 
$$ 
Then 
$$
\omega'_{l_{1},l_{2}} = \varpi_{l_{1},l_{2}} - \sum_{i=1}^{g}  
{\rm Re} \left( \oint_{b_{i}} \varpi_{l_{1},l_{2}} \right)_{i} 
\left( {\rm Re} \left( \oint_{b_{j}} \varpi_{i} \right)_{i,j} \right)^{-1} (\varpi_{i})_{i}^{T}
$$
satisfies 
$$
{\rm Re} \left( \oint_{a_{i}} \omega'_{l_{1},l_{2}} \right) = 
{\rm Re} \left( \oint_{b_{i}} \omega'_{l_{1},l_{2}} \right) = 0, 
$$
and has simple poles at $\varphi_{l_{1}}(x_{l_{1}}), \varphi_{l_{2}}(x_{l_{2}})$ with residues $1, -1$ respectively. 
Therefore, one has $\omega'_{l_{1},l_{2}} = \omega_{l_{1},l_{2}}$. 
\end{proof}

\begin{corollary}
The $g \times g$ real matrix $\left( \oint_{b_{j}} {\rm w}^{b_{i}} \right)_{i,j}$ is 
symmetric and invertible, 
and one has 
$$
\omega_{l_{1},l_{2}} = \varpi_{l_{1},l_{2}} - \sum_{i=1}^{g} \left( \oint_{b_{i}} {\rm w}^{\rho_{l_{1},l_{2}}} \right)_{i} 
\left( \left( \oint_{b_{j}} {\rm w}^{b_{i}} \right)_{i,j} \right)^{-1} (\varpi_{i})_{i}^{T} 
+ \left( \frac{O(1)}{\log s} \right) dz. 
$$
\end{corollary}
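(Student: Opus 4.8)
The plan is to deduce the displayed identity by feeding the asymptotic expansions of Corollary 4.2 into the exact formula of Theorem 4.3, and to obtain the symmetry and invertibility of $M:=\bigl(\oint_{b_j}{\rm w}^{b_i}\bigr)_{i,j}$ along the way. For the symmetry: each $b_i$ is (homologous to) the fundamental cycle of a single edge of $G\setminus T_G$ with respect to $T_G$, hence a reduced cycle, so writing it as a $1$-chain $\sum_{e}c_e(b_i)\,e$ on $G$ one has $c_e(b_i)\in\{0,\pm1\}$ and ${\rm w}^{b_i}(e)=c_e(b_i)$, whence $\oint_{b_j}{\rm w}^{b_i}=\sum_{e\in E_G}\ell(e)\,c_e(b_i)\,c_e(b_j)$, which is manifestly symmetric in $i,j$. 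This expression exhibits $M$ as the Gram matrix of the classes $b_1,\dots,b_g$ — a basis of $H_1(G,\mathbb{R})$ — with respect to the positive-definite length form $\langle x,y\rangle=\sum_e\ell(e)\,x_e y_e$ on $H_1(G,\mathbb{R})\subset\mathbb{R}^{E_G}$, so $M$ is positive definite, in particular invertible. (Alternatively, by Corollary 4.2 one has $\tfrac1{\log s}\,{\rm Re}\bigl(\oint_{b_j}\varpi_i\bigr)\to M$; the left-hand matrix equals $\tfrac{2\pi}{-\log s}\,{\rm Im}\,\tau(s)$ for the Riemann period matrix $\tau(s)$ of $\mathcal{R}_s$, hence is positive definite for every small $s$, forcing $M\succeq 0$, and the Gram description upgrades this to $M\succ 0$.)

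Next I would substitute into Theorem 4.3. Write that formula as $\omega_{l_1,l_2}=\varpi_{l_1,l_2}-v\,N^{-1}W^{T}$, where $v=\bigl({\rm Re}(\oint_{b_i}\varpi_{l_1,l_2})\bigr)_i$ is a row vector, $N=\bigl({\rm Re}(\oint_{b_j}\varpi_i)\bigr)_{i,j}$, and $W=(\varpi_i)_i$. By Corollary 4.2, $v=\log s\,(u+\varepsilon)$ with $u=\bigl(\oint_{b_i}{\rm w}^{\rho_{l_1,l_2}}\bigr)_i$ and $\varepsilon$ of size $O(1)/\log s$, and $N=\log s\,(M+E)$ with $E$ a matrix of size $O(1)/\log s$. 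Since $M$ is invertible, for $s$ small enough $M+E$ is invertible and, by a Neumann series (equivalently, continuity of matrix inversion at $M$), $(M+E)^{-1}=M^{-1}+O(1)/\log s$; hence $N^{-1}=\tfrac1{\log s}\bigl(M^{-1}+O(1)/\log s\bigr)$. The two factors $\log s$ and $1/\log s$ cancel in $v\,N^{-1}$, yielding $v\,N^{-1}=u\,M^{-1}+O(1)/\log s$.

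Finally, multiplying by $W^{T}=(\varpi_i)_i^{T}$: each $\varpi_i/dz$ is holomorphic on $\Omega_\Gamma$, hence bounded on compact subsets, so the residual contribution is $\bigl(\tfrac{O(1)}{\log s}\bigr)\,dz$ uniformly on compact sets, and one arrives at exactly the asserted expression for $\omega_{l_1,l_2}$. The only step that is not routine $O(1)$-bookkeeping is the invertibility of $M$; I expect this to be the main point, and would handle it through the Gram-matrix description above, which is precisely what legitimizes the asymptotic inversion of $N=\log s\,(M+O(1)/\log s)$ for all sufficiently small $s$.
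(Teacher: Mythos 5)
Your proposal is correct and follows essentially the same route as the paper: the paper likewise asserts that $\left(\oint_{b_j}{\rm w}^{b_i}\right)_{i,j}$ is symmetric and positive definite (hence invertible) and then derives the formula by substituting Corollary 4.2 into Theorem 4.3. Your Gram-matrix justification of positive definiteness and the Neumann-series bookkeeping for $(M+E)^{-1}$ simply supply details the paper leaves implicit.
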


\begin{proof}
The matrix $\left( \oint_{b_{j}} {\rm w}^{b_{i}} \right)_{i,j}$ is symmetric and positive definite, 
and hence is invertible. 
Therefore, 
the formula follows from Theorem 4.3 and Corollary 4.2. 
\end{proof}

\begin{proposition}
Denote by ${\rm w}_{l_{1},l_{2}}$ the exact $1$-form on $C$ whose residues are $1, -1$ 
at $l_{1}, l_{2}$ respectively. 
Then 
$$
{\rm w}_{l_{1},l_{2}} = {\rm w}^{\rho_{l_{1},l_{2}}} - \sum_{i=1}^{g}  
\left( \oint_{b_{i}} {\rm w}^{\rho_{l_{1},l_{2}}} \right)_{i} 
\left( \left( \oint_{b_{j}} {\rm w}^{b_{i}} \right)_{i,j} \right)^{-1} ({\rm w}^{b_{i}})_{i}^{T}. 
$$
\end{proposition}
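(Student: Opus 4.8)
The plan is to verify that the right-hand side, which I denote ${\rm w}'$, has the two properties characterizing ${\rm w}_{l_1,l_2}$ — it is exact, and it has residue $1$ at $l_1$, residue $-1$ at $l_2$, and residue $0$ at every other leaf — and then to conclude ${\rm w}' = {\rm w}_{l_1,l_2}$ from the fact (recalled in Section 2 from \cite[Proposition 2.27]{L}) that an exact $1$-form on $C$ is determined by its residues. This is the purely combinatorial counterpart of the argument proving Theorem 4.3; note first that the matrix $M := \left(\oint_{b_j}{\rm w}^{b_i}\right)_{i,j}$ is invertible by Corollary 4.4, so that ${\rm w}'$ is well defined at all.

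For the residues: each loop $b_i$ lies in the graph $G$ and so passes through no leaf of $C$, whence ${\rm w}^{b_i}(l) = 0$ for every $l \in L_G$, i.e.\ ${\rm w}^{b_i} \in \Omega_{\mathcal{H}}(C)$; and the path $\rho_{l_1,l_2}$ lies in the subtree $T_G$ and meets the leaves of $C$ only at its endpoints $l_1$ and $l_2$, so ${\rm w}^{\rho_{l_1,l_2}}$ has residue $1$ at $l_1$, $-1$ at $l_2$, and $0$ at the remaining leaves. Since ${\rm w}'$ differs from ${\rm w}^{\rho_{l_1,l_2}}$ by a linear combination of the ${\rm w}^{b_i}$, it has exactly these residues.

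For exactness: the integral $\int_\rho$ of a $1$-form over a loop depends only on the class of $\rho$ in $H_1(G,\mathbb{Z})$ — this is immediate from the defining sum together with ${\rm w}(e)+{\rm w}(-e)=0$ and $\ell(e)=\ell(-e)$ — and the loops $b_1,\dots,b_g$, one around each edge of $G\setminus T_G$, form a basis of $H_1(G,\mathbb{Z})$; hence it is enough to show $\oint_{b_k}{\rm w}'=0$ for each $k$. The key point is that the column vector $\oint_{b_k}({\rm w}^{b_i})_i^T$ is precisely the $k$-th column of $M$, so that $M^{-1}\oint_{b_k}({\rm w}^{b_i})_i^T$ is the $k$-th standard basis vector; pairing it with the row vector $\left(\oint_{b_i}{\rm w}^{\rho_{l_1,l_2}}\right)_i$ returns its $k$-th entry $\oint_{b_k}{\rm w}^{\rho_{l_1,l_2}}$, and therefore $\oint_{b_k}{\rm w}' = \oint_{b_k}{\rm w}^{\rho_{l_1,l_2}} - \oint_{b_k}{\rm w}^{\rho_{l_1,l_2}} = 0$. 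Thus ${\rm w}'$ is exact, and the proof is complete.

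The computation is elementary linear algebra once these ingredients are in place; the only steps needing a little care are keeping the row/column conventions straight in the identity $M^{-1}\oint_{b_k}({\rm w}^{b_i})_i^T = e_k$ — which is what makes the period computation collapse — and fixing the orientation of $\rho_{l_1,l_2}$ so that its residues come out as $1$ at $l_1$ and $-1$ at $l_2$ and not the reverse. Neither is a genuine obstacle, and indeed this proposition is the tropical shadow of Theorem 4.3 together with Corollary 4.4.
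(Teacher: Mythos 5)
Your proposal is correct and follows the same route as the paper: the paper's proof is the one-line observation that both sides are exact $1$-forms on $C$ with residues $1, -1$ at $l_{1}, l_{2}$, which by \cite[Proposition 2.27]{L} determines them uniquely. You have simply filled in the verification (the residue count via ${\rm w}^{b_{i}} \in \Omega_{\mathcal{H}}(C)$, and the cancellation $\oint_{b_{k}}{\rm w}' = 0$ from the linear algebra), which is exactly what the paper leaves implicit.
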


\begin{proof}
The formula follows from that the left and right hand sides are both exact $1$-forms 
on $C$ whose residues are $1, -1$ at $l_{1}, l_{2}$ respectively. 
\end{proof}

\begin{corollary}
For each oriented edge $h \in E$, 
$$
\omega_{l_{1},l_{2}} = {\rm w}_{l_{1},l_{2}}(h) 
\left( \frac{1}{z - x_{h}} - \frac{1}{z - x_{-h}} \right) dz + 
\left( \frac{O(1)}{\log s} + \sum_{e \in E} y_{e} \, O(1) \right) dz
$$
around $z = x_{h}, x_{-h}$. 
\end{corollary}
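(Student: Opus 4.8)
The plan is to derive the local expansion of $\omega_{l_{1},l_{2}}$ near $z = x_{h}, x_{-h}$ by substituting the local expansions of Proposition 4.1(1) into the formula of Corollary 4.4 and then recognising the resulting leading coefficient by means of Proposition 4.5. Recall that Corollary 4.4 gives
$$
\omega_{l_{1},l_{2}} = \varpi_{l_{1},l_{2}} - \sum_{i=1}^{g} \left( \oint_{b_{i}} {\rm w}^{\rho_{l_{1},l_{2}}} \right)_{i} \left( \left( \oint_{b_{j}} {\rm w}^{b_{i}} \right)_{i,j} \right)^{-1} (\varpi_{i})_{i}^{T} + \left( \frac{O(1)}{\log s} \right) dz ,
$$
where the row vector $\left( \oint_{b_{i}} {\rm w}^{\rho_{l_{1},l_{2}}} \right)_{i}$ and the symmetric, invertible $g \times g$ matrix $\left( \oint_{b_{j}} {\rm w}^{b_{i}} \right)_{i,j}$ have real entries that are intrinsic to the tropical curve $C = (G,\ell)$ and therefore do not depend on $s$.

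First I would insert into this identity, valid around $z = x_{h}, x_{-h}$, the expansions
$$
\varpi_{l_{1},l_{2}} = {\rm w}^{\rho_{l_{1},l_{2}}}(h) \left( \frac{1}{z - x_{h}} - \frac{1}{z - x_{-h}} \right) dz + \left( \sum_{e \in E_{G}} y_{e} \, O(1) \right) dz
$$
and
$$
\varpi_{i} = {\rm w}^{b_{i}}(h) \left( \frac{1}{z - x_{h}} - \frac{1}{z - x_{-h}} \right) dz + \left( \sum_{e \in E_{G}} y_{e} \, O(1) \right) dz \qquad (1 \le i \le g)
$$
of Proposition 4.1(1). The coefficient of $\left( \dfrac{1}{z - x_{h}} - \dfrac{1}{z - x_{-h}} \right) dz$ then equals
$$
{\rm w}^{\rho_{l_{1},l_{2}}}(h) - \sum_{i=1}^{g} \left( \oint_{b_{i}} {\rm w}^{\rho_{l_{1},l_{2}}} \right)_{i} \left( \left( \oint_{b_{j}} {\rm w}^{b_{i}} \right)_{i,j} \right)^{-1} \left( {\rm w}^{b_{i}}(h) \right)_{i}^{T} ,
$$
which is, by Proposition 4.5 evaluated at the oriented edge $h \in E_{G}$, equal to ${\rm w}_{l_{1},l_{2}}(h)$.

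It then remains to account for the error terms. The expansion of $\varpi_{l_{1},l_{2}}$ contributes a term $\left( \sum_{e} y_{e} \, O(1) \right) dz$, and each expansion of $\varpi_{i}$ contributes such a term as well; since the scalars $\left( \oint_{b_{i}} {\rm w}^{\rho_{l_{1},l_{2}}} \right)_{i}$ and the entries of $\left( \left( \oint_{b_{j}} {\rm w}^{b_{i}} \right)_{i,j} \right)^{-1}$ are $O(1)$ in $s$, the finite $\mathbb{R}$-linear combination of the $\varpi_{i}$ still contributes only $\left( \sum_{e} y_{e} \, O(1) \right) dz$; and Corollary 4.4 already carries the separate term $\left( \frac{O(1)}{\log s} \right) dz$. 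Summing these gives the remainder $\left( \frac{O(1)}{\log s} + \sum_{e \in E_{G}} y_{e} \, O(1) \right) dz$, which is the assertion.

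The whole argument is a substitution, so the only thing that needs genuine care is the bookkeeping of the two distinct error scales: the $O(1)/\log s$ contribution originating from Corollary 4.4 must be kept apart from the $\sum_{e} y_{e} \, O(1)$ contributions coming from Proposition 4.1(1), and one should verify that the $s$-independent coefficients produced by Proposition 4.5 cannot promote either of these to a larger order. In particular one should note that the residue of $\omega_{l_{1},l_{2}}$ at $x_{h}$ is only ${\rm w}_{l_{1},l_{2}}(h) + O(1/\log s)$, so that the $\frac{1}{z-x_{h}}$-part of the remainder is legitimately of size $O(1/\log s)$; apart from this observation there is no obstacle.
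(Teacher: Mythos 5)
Your proposal is correct and follows exactly the paper's route: the paper's own proof of this corollary is a one-line citation of Proposition 4.1(1), Corollary 4.4 and Proposition 4.5, and you have simply carried out the substitution and error bookkeeping explicitly. No issues.
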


\begin{proof}
The assertion follows from Propositions 4.1 (1), Corollary 4.4 and Proposition 4.5. 
\end{proof}

\subsection{Tropical convergence of integrals}

\begin{theorem}
For a point $a \in \Omega_{\Gamma}$ and each oriented edge $h \in E_{G}$,  
$$
\int_{a}^{\phi_{h}(a)} \omega_{l_{1},l_{2}} = 
{\rm w}_{l_{1},l_{2}}(h) \log (y_{h}) + \frac{O(1)}{\log s} + \sum_{e \in E} y_{e} \, O(1). 
$$ 
\end{theorem}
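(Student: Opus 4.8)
The plan is to obtain the formula by integrating, term by term, the global decomposition of $\omega_{l_1,l_2}$ furnished by Corollary 4.4, and then feeding in Proposition 4.1 (2) and Proposition 4.5; this is the exact analogue of how Proposition 4.1 (2) is deduced from Proposition 4.1 (1) and (3.1). Write $c = (c_i)_{1\le i\le g}$ for the real row vector $\left(\oint_{b_i}{\rm w}^{\rho_{l_1,l_2}}\right)_i\left(\left(\oint_{b_j}{\rm w}^{b_i}\right)_{i,j}\right)^{-1}$, which depends only on the tropical curve $C$. Corollary 4.4 reads, as an identity of differentials,
\[
\omega_{l_1,l_2} = \varpi_{l_1,l_2} - \sum_{i=1}^{g} c_i\,\varpi_i + \frac{O(1)}{\log s}\,dz ,
\]
while Proposition 4.5 gives the companion identity of $1$-forms on $C$, namely ${\rm w}_{l_1,l_2} = {\rm w}^{\rho_{l_1,l_2}} - \sum_{i=1}^{g} c_i\,{\rm w}^{b_i}$.

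First I would integrate the displayed identity from $a$ to $\phi_h(a)$ along the same path used in Proposition 4.1 (2); by linearity this expresses $\int_a^{\phi_h(a)}\omega_{l_1,l_2}$ as the matching combination of $\int_a^{\phi_h(a)}\varpi_{l_1,l_2}$, the $\int_a^{\phi_h(a)}\varpi_i$, and $\int_a^{\phi_h(a)}\tfrac{O(1)}{\log s}\,dz$. Then I would substitute $\int_a^{\phi_h(a)}\varpi_{l_1,l_2} = \log(y_h)\,{\rm w}^{\rho_{l_1,l_2}}(h) + \sum_e y_e\,O(1)$ and $\int_a^{\phi_h(a)}\varpi_i = \log(y_h)\,{\rm w}^{b_i}(h) + \sum_e y_e\,O(1)$ from Proposition 4.1 (2). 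Collecting the coefficient of $\log(y_h)$ produces ${\rm w}^{\rho_{l_1,l_2}}(h) - \sum_i c_i\,{\rm w}^{b_i}(h)$, which is exactly ${\rm w}_{l_1,l_2}(h)$ by Proposition 4.5; the contributions $\sum_i c_i\sum_e y_e\,O(1)$ merge into $\sum_e y_e\,O(1)$ since the $c_i$ are absolute constants. This leaves the assertion modulo the claim $\int_a^{\phi_h(a)}\tfrac{O(1)}{\log s}\,dz = \tfrac{O(1)}{\log s}$.

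The main point of the argument is this last error estimate, which is essentially bookkeeping. Because the points $x_h$ $(h\in LE_G)$ are fixed independently of $s$, the neighbourhoods $U_e$, the pieces $F_v$, and hence a path from $a$ to $\phi_h(a)$ crossing the seam over $h$ exactly once, can all be taken inside one fixed compact subset of $\Omega_\Gamma$ that avoids $\varphi_{l_1}(x_{l_1})$ and $\varphi_{l_2}(x_{l_2})$; together with the fact, recalled above from \cite{I2}, that $\varpi_i/dz$ and $\varpi_{l_1,l_2}/dz$ are power series in the $y_e$ with coefficients in the fixed ring $R_G\bigl[z,\prod_h(z-x_h)^{-1}\bigr]$ that converge for small $y_e$, this makes the implied constants in Corollary 4.4 and Proposition 4.1 (2) uniform in $s$ along that path. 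Since the path has bounded length and $|\log s|\to\infty$, integrating $\tfrac{O(1)}{\log s}\,dz$ gives $\tfrac{O(1)}{\log s}$, and the identity follows. One also checks that the path may indeed be taken to be the one in Proposition 4.1 (2): $\omega_{l_1,l_2}$, $\varpi_{l_1,l_2}$ and the $\varpi_i$ are all holomorphic on $\Omega_\Gamma$ away from the two avoided punctures, so each of the three integrals is unaffected by the admissible deformations of the path. A direct alternative would be to integrate the local expansion of Corollary 4.7 and use (3.1) to identify $\int_a^{\phi_h(a)}\bigl(\tfrac1{z-x_h}-\tfrac1{z-x_{-h}}\bigr)dz = \log[\phi_h(a),a;x_h,x_{-h}] = \log y_h$, but that requires separately controlling the part of the path outside the neighbourhoods of $x_h$ and $x_{-h}$, so the route through the global identities is cleaner.
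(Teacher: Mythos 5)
Your argument composes the same ingredients as the paper's one-line proof: the paper cites Corollary 4.6 together with (3.1), where Corollary 4.6 is itself assembled from Proposition 4.1(1), Corollary 4.4 and Proposition 4.5, and Proposition 4.1(2) is Proposition 4.1(1) plus (3.1). So your route through the global decomposition is the same chain of lemmas unpacked one level, and your identification of the leading coefficient ${\rm w}^{\rho_{l_1,l_2}}(h)-\sum_i c_i\,{\rm w}^{b_i}(h)={\rm w}_{l_1,l_2}(h)$ via Proposition 4.5 is exactly what the paper's Corollary 4.6 encodes.

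There is, however, a concrete flaw in your treatment of the residual term. You justify $\int_a^{\phi_h(a)}\frac{O(1)}{\log s}\,dz=\frac{O(1)}{\log s}$ by placing the path from $a$ to $\phi_h(a)$ inside one fixed, $s$-independent compact subset of $\Omega_\Gamma$. That cannot be arranged: by (3.1) the endpoint $\phi_h(a)$ converges to the fixed point $x_h$ of $\phi_h$ as $y_h\to 0$, and $x_h$ lies in the limit set of $\Gamma$, not in $\Omega_\Gamma$; any fixed compact subset of $\Omega_\Gamma$ is bounded away from the limit set and so eventually fails to contain $\phi_h(a)$. (This unboundedness is precisely what produces the divergent main term ${\rm w}_{l_1,l_2}(h)\log(y_h)$, so it cannot be wished away.) The repair stays entirely inside your own framework: the $\frac{O(1)}{\log s}\,dz$ term of Corollary 4.4 is not an arbitrary differential with bounded coefficient but $\frac{1}{\log s}$ times a real linear combination $\sum_i d_i\varpi_i$ with $d_i=O(1)$, arising from the $O(1/\log s)$ correction to the coefficient vector $c$; it should therefore be integrated by a third application of Proposition 4.1(2) rather than by a sup-norm bound along the path. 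Doing so gives $\frac{O(1)}{\log s}\bigl(\log(y_h)\,{\rm w}^{b_i}(h)+\sum_e y_e\,O(1)\bigr)$, which, since $\log(y_h)=\ell(h)\log s+\sqrt{-1}\arg(y_h)$, is a priori only $O(1)$ rather than $O(1)/\log s$ — a discrepancy already latent in the paper's own statements of Corollary 4.6 and Theorem 4.7, and harmless for Theorem 1.1 where everything is divided by $\log s$, but one you should be explicit about rather than obtain by an invalid compactness argument.
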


\begin{proof}
The formula follows from Corollary 4.6 and (3.1). 
\end{proof}

\subsection{Proof of Theorem 1.1} 
Since the maps $\mathcal{A}_{R}/\log s$ and $\pi_{R}$ are both linear in $R$, 
one may assume that $\omega_{R} = \omega_{l_{1},l_{2}}$. 
Take an oriented edge $e \in E_{G}$ and a point $p \in P_{v(-e)} \setminus U_{-e}$. 
Then for any point $q \in P_{v(e)} - U_{e}$, 
$$
\lim_{s \downarrow 0} \frac{1}{\log s} \int_{p}^{q} \omega_{l_{1},l_{2}} = 
\lim_{s \downarrow 0} \frac{1}{\log s} \int_{p}^{\phi_{e}(p)} \omega_{l_{1},l_{2}}. 
$$
For a constant $c$ and $0 \leq t \leq 1$, 
put 
$$
\phi_{c,t} = \left( \begin{array}{cc} x_{e} & x_{-e} \\ 1 & 1 \end{array} \right) 
\left( \begin{array}{cc} 1 & 0 \\ 0 & c \cdot s^{t \cdot l(e)} \end{array} \right) 
\left( \begin{array}{cc} x_{e} & x_{-e} \\ 1 & 1 \end{array} \right)^{-1}. 
$$
Then by Theorem 4.7, 
\begin{eqnarray*}
\lim_{s \downarrow 0} \frac{1}{\log s} \int_{p}^{\phi_{c,t}(p)} \omega_{l_{1},l_{2}} 
& = & 
\lim_{s \downarrow 0} \frac{1}{\log s} {\rm w}_{l_{1},l_{2}}(e) 
\log \left[ \phi_{c,t}(p), p; x_{e}, x_{-e} \right] 
\\
& = & 
t \, l(e) {\rm w}_{l_{1},l_{2}}(e) 
\\
& = &t \int_{e} {\rm w}_{l_{1},l_{2}}.  
\end{eqnarray*}
This completes the proof of Theorem 1.1. 

\subsection{Proof of Theorem 1.2} 
One may assume that $\omega_{R} = \omega_{l_{1},l_{2}}$, 
$\ell_{s}(e) = \ell(e)$ and $\theta_{s}(e)$ is independent of $s$, 
and put $\theta(e) = \theta_{s}(e)$. 
Then for a point $a \in \Omega_{\Gamma}$, 
by Theorem 4.7, 
\begin{eqnarray*}
{\rm Re} \left( \oint_{\rho_{s}} \omega_{l_{1},l_{2}} \right)  
& = & 
\log s \, \sum_{j=1}^{n} {\rm w}_{l_{1},l_{2}}(e_{j}) \ell(e_{j}) 
+ \frac{O(1)}{\log s} + \sum_{e \in E_{G}} y_{e} \, O(1) 
\\
& = & 
\log s \int_{\rho} {\rm w}_{l_{1},l_{2}} + \frac{O(1)}{\log s} + \sum_{e \in E_{G}} y_{e} \, O(1) 
\\
& \rightarrow & 
0
\end{eqnarray*}
under $s \downarrow 0$ since ${\rm w}_{l_{1},l_{2}}$ is exact. 
Therefore, by Theorem 4.7 again, 
\begin{eqnarray*}
\lim_{s \downarrow 0} \oint_{\rho_{s}} \omega_{l_{1},l_{2}} 
& = & 
\lim_{s \downarrow 0} \sqrt{-1} \, {\rm Im} \int_{a}^{\rho^{*}(a)} \omega_{l_{1},l_{2}} 
\\
& = & 
\lim_{s \downarrow 0} \sqrt{-1} \, {\rm Im} 
\left( \sum_{j=1}^{n} \int_{\rho_{j-1}^{*}(a)}^{\rho_{j}^{*}(a)} \omega_{l_{1},l_{2}} \right) 
\quad (\rho_{j}^{*} := \phi_{e_{j}} \cdots \phi_{e_{1}}) 
\\
& = & 
\sum_{j=1}^{n} {\rm w}_{l_{1}.l_{2}}(e_{j}) \log(\theta(e_{j})).  
\end{eqnarray*}
This completes the proof of Theorem 1.2.

%\mbox{}

%\vspace{-5ex}
%\begin{figure}
%\hspace{5ex}  \includegraphics[width=150mm]{Figure_1.pdf}
%\end{figure}

%\vspace{-30ex}
%\noindent

%\vspace{4ex} 

\section*{Declarations}

\begin{itemize}

\item {\bf Funding}. 
This work is partially supported by the JSPS Grant-in-Aid for 
Scientific Research No. 25K06920.

\item {\bf Conflict of interest statement}. 
The author declares that he has no known competing financial interests 
or personal relationships that could have appeared to influence the work 
reported in this article. 

%\item Ethics approval and consent to participate
%\item Consent for publication

\item {\bf Data availability}. 
No data was used for the research described in this article. 

%\item Materials availability
%\item Code availability 
%\item Author contribution
\end{itemize}

\renewcommand{\refname}{\bf References}

\end{document}